\newtheorem{theorem}{Theorem}[section]
\newtheorem{corollary}[theorem]{Corollary}
\newtheorem{lemma}[theorem]{Lemma}
\newtheorem{remark}[theorem]{Remark}
\theoremstyle{definition}
\newtheorem{definition}{Definition}[section]
\newcounter{minutes}\setcounter{minutes}{\time}
\newcounter{hours}\setcounter{hours}{\time}
\newcommand{\D}{{\mathbb D}}
\newcommand{\IC}{{\mathbb C}}
\newcommand{\real}{{\operatorname{Re}\,}}
\newcommand{\ds}{\displaystyle}
\begin{document}

\bibliographystyle{amsplain}

\title[Nehari's univalence criteria and pre-Schwarzian derivative]%
{Nehari's univalence criteria, pre-Schwarzian derivative and applications}

\def\thefootnote{}
\footnotetext{ \texttt{\tiny File:~\jobname .tex,
          printed: \number\day-\number\month-\number\year,
          \thehours.\ifnum\theminutes<10{0}\fi\theminutes}
} \makeatletter\def\thefootnote{\@arabic\c@footnote}\makeatother

\author{Sarita Agrawal${}^*$}
\address{Sarita Agrawal, Institute of Mathematics and Applications,
Andharua, Bhubaneswar 751029, Odisha, India}
\email{saritamath44@gmail.com}

\author{Swadesh Kumar Sahoo}
\address{Swadesh Kumar Sahoo, Discipline of Mathematics,
Indian Institute of Technology Indore,
Simrol, Khandwa Road, Indore 453 552, India}
\email{swadesh.sahoo@iiti.ac.in}

\thanks{${}^*$ The corresponding author}

\begin{abstract}
In this paper we study sharp estimates of pre-Schwarzian derivatives of functions belonging to the Nehari-type classes by using techniques from
differential equations. 
In the sequel, we also see that a solution of a complex 
differential equation has a special form in terms of
ratio of hypergeometric functions resulting to an integral representation.
Finally, we attempt to study those univalent functions in the unit disk for which the image domain is an unbounded John domain.\\

\smallskip
\noindent
{\bf 2010 Mathematics Subject Classification}. Primary 30C45; Secondary 26D10, 26D20, 30C20, 30C55, 33C05, 34A12.

\smallskip
\noindent
{\bf Key words and phrases.} 
Pre-Schwarzian and Schwarzian derivatives; The Nehari class; Hypergeometric function; 
Initial value problem; Gr\"onwall's inequality; John domain.
\end{abstract}

\maketitle
\pagestyle{myheadings}
\markboth{S. Agrawal and S. K. Sahoo}{Nehari's univalence criteria and pre-Schwarzian derivative}

\section{Introduction and preliminaries}\label{Sec-Intro}
Let $\D:=\{z\in \mathbb{C}:\,|z|<1\}$ be the open unit disk in the complex plane $\mathbb{C}$.
We denote by $\overline{\IC}:=\mathbb{C}\cup\{\infty\}$ the extended complex plane or equivalently the {\em Riemann sphere}.
The {\em Schwarzian derivative} of a locally injective meromorphic 
function $f:\D\to \overline{\IC}$ is defined by
$$S_f(z)=T'_f(z)-\frac{1}{2}T^2_f(z)
$$
at each point $z$ where $f$ is analytic, and $S_f(z)=S_{1/f}(z)$ at the poles of $f$.
Here, the quantity $T_f(z)={f''(z)}/{f'(z)}$ is known as the {\em pre-Schwarzian derivative} of $f$ 
or the logarithmic derivative of $f'$.  
We denote by $\mathcal{A}$, the class of all analytic  
functions in $\D$ normalized so that $f(0)=0=f^\prime(0)-1$.
The set $\mathcal{S}$ denotes the class of univalent functions in $\mathcal{A}$. 
There are many sufficient conditions available in the literature for a function 
to be univalent in $\D$ and most of them are 
very far from necessary conditions. 
However, there are a few of them which are also close to necessity. 
One such example is about the well-known Nehari criterion. 
From this fact, the Nehari class is generated and it is associated with the 
Schwarzian derivative of functions (see \cite{Dur83,Neh49,Neh54}).
Moreover, sufficient conditions for starlikeness and convexity in terms of Schwarzian 
derivatives are studied in \cite{KS}.
For $\alpha\ge0$ and $k\ge 0$, we set
\begin{equation}\label{Nehari-eqn}
\mathcal{N}_{\alpha}(k)=\left\{f\in\mathcal{A}:(1-|z|^2)^\alpha|S_f(z)|\le k,
~f''(0)=0 \right\}.
\end{equation}
The set $\mathcal{N}_2(2)$, called the Nehari class, is intensively studied by Chuaqui, Osgood and Pommerenke in \cite{COP96}. 
Due to \cite[Lemma~1]{CO94},
if $f\in \mathcal{N}_2(k)$ then $(1-|z|^2)|T_f(z)|\le k|z|$, for $0\le k\le 2$.
However, the constant $k$ in this case is not best possible. This result is indeed improved and
discussed in Section~\ref{sec2} of this paper.
Except for the cases $\alpha=2, k=2; \alpha=1, k=4 \mbox{ and } \alpha=0, k=\pi^2/2$, all mapping considered in the Schwarzian classes $\mathcal{N}_{\alpha}(k)$ have images that are quasidisks, that is, John disks whose complements are also John disks \cite{NV91} (See Section~\ref{sec4} for the definition of John disks). It follows from \cite[Theorem~6]{GP84}, that if $|S_f(z)|\le \rho(z)$ is a sufficient condition for univalence in the disk, then $|S_f(z)|\le t\rho(z)$ for some $0\le t<1$ which guarantees that the images are quasidisks. Furthermore, in the cases $\alpha=1, k=4 \mbox{ and } \alpha=0, k=\pi^2/2$, the images will also be quasidisks as soon as they are Jordan domains.  

We conclude this section by providing short introduction about upcoming sections.
In Section~\ref{sec2}, we prove sharp estimates for the pre-Schwarzian 
derivatives for functions in Nehari-type classes. An estimate related to the Schwarzian derivative is established in Section~\ref{sec3}.
Concluding remarks and future directions are discussed in Section~\ref{sec4}.

\section{Estimates of the pre-Schwarzian derivative}\label{sec2}
This section deals with sharp estimates of pre-Schwarzian derivative
of functions $f$ belonging to the families $\mathcal{N}_0(k)$, $\mathcal{N}_1(k)$, and 
$\mathcal{N}_2(k)$.
A technique from differential equations is used in estimating the pre-Schwarzian derivatives. 
First, note that the family $\mathcal{N}_2(k)$ has got special attractions in geometric
function theory in compare to the other two families (see \cite[pp.~261--264]{Dur83}). 
It was initially pointed out by Kraus \cite{Kra32}
in 1932 and was rediscovered by Nehari \cite{Neh49} in 1949 that 
$|S_f(z)|\le 6(1-|z|^2)^{-2}$, $|z|<1$, for each $f\in\mathcal{S}$.
In the same paper Nehari found its converse counterpart which says that
for an $f\in\mathcal{A}$ if $|S_f(z)|\le 2(1-|z|^2)^{-2}$, 
then $f\in \mathcal{S}$. These results are concerning 
the functions related to the family $\mathcal{N}_2(2)$.
Of course, definition of $\mathcal{N}_\alpha(k)$ also involves a normalization
which is essential to prove our main results. 
In addition to this, the same paper of Nehari
also deals with the inequality $|S_f(z)|\le \pi^2/2$ which 
is sufficient for univalence. So, this is related to the family 
$\mathcal{N}_0(\pi^2/2)$. In a similar vein, Pokornyi \cite{Pok51}
stated that the condition $|S_f(z)|\le 4(1-|z|^2)^{-1}$ is sufficient
for univalence and Nehari \cite{Neh54} supplied its proof. Certainly,
functions from the family $\mathcal{N}_1(k)$ satisfy the last inequality
with the constant $k=4$. These are some of the reasons for which we present 
our results starting with the family $\mathcal{N}_2(k)$ followed by 
$\mathcal{N}_0(k)$ and $\mathcal{N}_1(k)$, respectively.

\medskip
It is now appropriate to recall

\subsection*{Gr\"onwall's Inequality}\cite[p.~241]{CC97}
Let $I$ denote an interval of the real line of the form $[a,\infty)$ or 
$[a,b]$ or $[a,b)$ with $a<b$. Let $\beta$ and $u$ be real valued
continuous functions defined on $I$. If $u$ is differentiable in the 
interior $I^0$ of $I$ (the interval without the end points $a$ and
possibly $b$) and satisfies the differential inequality
$$u'(t)\le u(t)\beta(t), \quad t\in I^0,
$$  
then $u$ is bounded by the solution of the differential equation
$u'(t)=u(t)\beta(t)$: 
$$u(t)\le u(a)\exp\left(\int_a^t\beta(s)\,ds\right)
$$
for all $t\in I$.
The following result is a generalization of \cite[Lemma~1]{CO94}. Note that this idea was originally proposed by Chuaqui and Osgood (see \cite[pp.~660-662]{CO94}), 
but it was not precisely estimated whereas an optimal bound for $|T_f|$, $f\in \mathcal{N}_2(k)$, 
was proved. Here we provide the sharp estimation of $|T_f|$, $f\in \mathcal{N}_2(k)$, precisely.
\begin{theorem}\label{newlemma}
If $f\in \mathcal{N}_2(k)$, $0\le k\le 2$, then 
\begin{equation}\label{newlemma-e1}
\left|T_f(z)\right|\le \frac{2|z|-2\beta^2 A_k(|z|)}{1-|z|^2},
\end{equation}
where $\ds A_k(z)=\frac{1}{\beta}\frac{(1+z)^\beta-(1-z)^\beta}{(1+z)^\beta+(1-z)^\beta}$
with $\beta=\sqrt{1-(k/2)}$. 
Equality holds at a single $z\neq 0$ if and only if $f$ 
is a suitable rotation of $A_k(z)$. Here, $(1\pm z)^\beta$ represents the principal powers
so that $(1\pm z)^\beta=\exp(\beta\log(1\pm z))$ are analytic in $\mathbb{D}$.
\end{theorem}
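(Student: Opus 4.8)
The plan is to identify $A_k$ as the extremal map and then run a comparison argument along a radius. I would first record the properties of $A_k$. Writing $A_k=\frac1\beta\,\frac{h-1}{h+1}$ with $h(z)=\bigl(\frac{1+z}{1-z}\bigr)^{\beta}=\exp\!\bigl(\beta\,\Log\tfrac{1+z}{1-z}\bigr)$, Möbius invariance of the Schwarzian gives $S_{A_k}=S_h$, and the identity $S_{e^{\beta\phi}}=S_\phi-\tfrac12\beta^{2}(\phi')^{2}$ with $\phi=\Log\frac{1+z}{1-z}$, $S_\phi=2(1-z^{2})^{-2}$, $(\phi')^{2}=4(1-z^{2})^{-2}$, $\beta^{2}=1-k/2$, yields $S_{A_k}(z)=k(1-z^{2})^{-2}$. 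Since $|1-z^{2}|\ge 1-|z|^{2}$, this gives $(1-|z|^{2})^{2}|S_{A_k}(z)|\le k$; as $A_k$ is odd with $A_k(0)=0$, $A_k'(0)=1$ (so $A_k''(0)=0$) and $A_k'$ is zero-free on $\D$, we get $A_k\in\mathcal N_2(k)$ and $T_{A_k}$ holomorphic on $\D$. Differentiating $\log A_k'$ gives
\[
T_{A_k}(z)=\frac{2z-2\beta^{2}A_k(z)}{1-z^{2}},
\]
which is exactly the right-hand side of \eqref{newlemma-e1} for real $z$. The key observation is that, restricted to $(0,1)$, $v:=T_{A_k}$ is real-valued and finite, solves the Riccati problem $v'=k(1-t^{2})^{-2}+\tfrac12 v^{2}$, $v(0)=0$ (this is just $S_{A_k}=v'-\tfrac12 v^{2}$ on $\mathbb R$), and, for $k>0$, is strictly increasing, hence positive.

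Next I would reduce to a one-dimensional problem and set up a differential inequality. Given $z_0\ne 0$, write $z_0=re^{i\theta}$; then $g(z)=e^{-i\theta}f(e^{i\theta}z)\in\mathcal N_2(k)$, $|T_g(r)|=|T_f(z_0)|$ and $A_k(|z_0|)=A_k(r)$, so it suffices to bound $|T_f(r)|$ for $r\in(0,1)$. Put $u(t)=|T_f(t)|$ on $[0,r]$; since $f''(0)=0$, $u(0)=0$. Using $T_f'=S_f+\tfrac12 T_f^{2}$, the Nehari bound $(1-t^{2})^{2}|S_f(t)|\le k$ and the triangle inequality for $|T_f(t+h)|-|T_f(t)|$, the upper right Dini derivative satisfies $D^{+}u(t)\le|T_f'(t)|\le|S_f(t)|+\tfrac12|T_f(t)|^{2}\le F(t,u(t))$, where $F(t,s)=k(1-t^{2})^{-2}+\tfrac12 s^{2}$, while $v'=F(t,v)$, $v(0)=0$. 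With $M:=\sup_{[0,r]}(u+v)<\infty$ and $F(t,u)-F(t,v)=\tfrac12(u+v)(u-v)$, a routine Dini-derivative computation shows $\varphi:=\max(u-v,0)$ is continuous with $\varphi(0)=0$ and $D^{+}\varphi(t)\le M\varphi(t)$ on $[0,r]$; Grönwall's inequality then forces $\varphi\equiv 0$, i.e.\ $u(t)\le v(t)$ on $[0,r]$. Undoing the rotation gives \eqref{newlemma-e1}.

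Finally, for the equality statement (the case $k=0$ being trivial, since $A_0(z)=z$ and \eqref{newlemma-e1} then forces $f(z)=z$), suppose equality holds at some $z_0\ne 0$; after the rotation above this reads $u(r)=v(r)$ for some $r\in(0,1)$. From $u\le v$ on $[0,r]$ and $D^{+}(v-u)\ge F(t,v)-F(t,u)=\tfrac12(u+v)(v-u)\ge 0$ it follows that $v-u$ is nondecreasing; being nonnegative and vanishing at $0$ and at $r$, it vanishes on $[0,r]$, so $u\equiv v$ there. Hence every inequality in the preceding estimate becomes an equality on $(0,r]$, where moreover $|T_f(t)|=v(t)>0$ (for $k>0$), so $T_f(t)\ne 0$ and $u$ is differentiable. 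Equality in the triangle inequality forces $S_f(t)$ to be a nonnegative real multiple of $T_f(t)^{2}$, and $u'(t)=|T_f'(t)|$ forces $T_f'(t)$ to be a nonnegative real multiple of $T_f(t)$; feeding both into $T_f'=S_f+\tfrac12 T_f^{2}$ shows $T_f(t)$ is real and nonnegative, whence $T_f(t)=v(t)=T_{A_k}(t)$ for $t\in(0,r]$. By the identity theorem $T_f\equiv T_{A_k}$ on $\D$, so $f'/A_k'$ is constant, and $f(0)=A_k(0)=0$, $f'(0)=A_k'(0)=1$ give $f=A_k$; restoring the original coordinates, $f$ is a rotation of $A_k$. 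The converse is immediate, since $|T_{A_k}(t)|=v(t)$ and rotations do not change $|T_{\cdot}|$. The genuinely routine parts are the computation of $S_{A_k}$ and $T_{A_k}$ in the first step and the standard Grönwall/Dini bookkeeping; I expect the real obstacle to be this equality analysis — extracting from the chain of equalities that $T_f$ is actually real and positive along the segment, and thus equals $T_{A_k}$ identically, while controlling where $T_f$ may vanish.
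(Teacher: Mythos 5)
Your proof is correct and follows essentially the same route as the paper: reduce to a radial comparison with the Riccati solution $w(t)=(2t-2\beta^{2}A_k(t))/(1-t^{2})$ of $w'=\tfrac12w^2+k(1-t^2)^{-2}$, $w(0)=0$, and close with a Gr\"onwall-type argument. You in fact supply several details the paper leaves implicit --- the Dini-derivative treatment of the zeros of $|T_f|$, and the extraction of $T_f(t)=T_{A_k}(t)$ (real and positive along the segment) from the chain of equalities in the equality case --- so there are no gaps.
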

\begin{proof}
A simple computation gives
$$T_f'(z)=\frac{1}{2}T_f^2(z)+S_f(z),\quad T_f(0)=0.
$$
Now, consider the initial value problem
$$w^{\prime}(x)=\frac{1}{2}w^2(x)+\frac{k}{(1-x^2)^2},\quad w(0)=0
$$
on $(-1,1)$. Note that it is satisfied by $\ds w(x)=\frac{2x-2\beta^2 A_k(x)}{1-x^2}$.
We shall show that $|T_f(z)|\le w(|z|)$.

Fix $z_0$ with $|z_0|=1$, and let
$$\psi(\tau)=|T_f(\tau z_0)|,\quad 0\le \tau<1.
$$
It is evident that the zeros of $\psi(\tau)$ are isolated unless $f(z)\equiv z$. Away from these zeros, $\psi(\tau)$ is 
differentiable and satisfies $\psi'(\tau)\le|T_f^{\prime}(\tau z_0)|$. Since $(1-\tau^2)^2|S_f (\tau z_0)|\le k$ 
we obtain
$$\frac{d}{d\tau}(\psi(\tau)-w(\tau))\le|T_f^{\prime}(\tau z_0)|-w^{\prime}(\tau)
\le\frac{1}{2}(|T_f(\tau z_0)|^2-w^2(\tau))=\frac{1}{2}(\psi(\tau)-w(\tau))(\psi(\tau)+w(\tau)).
$$
The initial condition $\psi(0)-w(0)=0$, with the Gr\"onwall inequality, tells us that 
$\psi(\tau)-w(\tau)\le 0$
and hence the required inequality follows.

For the equality part, one can easily see that $A_k(z)\in \mathcal{N}_2(k)$ and the equality
$$\left|T_f(z)\right|=\frac{2|z|-2\beta^2 A_k(|z|)}{1-|z|^2}
$$
holds for $0\neq z\in \D$, with a suitable rotation ($\theta=-\arg(z)$) of $A_k(z)$. 
Conversely, suppose that equality holds in \eqref{newlemma-e1} 
for some $0\neq z_1\in \D$. Fix $z_0=z_1/|z_1|$ and $\psi(\tau)$ as defined above.
Then $\psi(|z_1|)=w(|z_1|)$, which can happen only if $\psi(\tau)=w(\tau)$ for
all $\tau\in [0,1)$. Hence, $T_f(\tau z_0)=e^{i\theta} w(\tau)$. From this, it follows that for all $z\in \D$, $T_f(z)=cw(z\overline{z_0})$ with $|c|=1$. 
Integration of both the sides with a suitable rotation ($c=\overline{z_0}$) and some simplification yields
$$f'(z)=\frac{(1-z^2 \overline{z_0}^2)^{\beta-1} K_1}{((1+z\overline{z_0})^\beta+(1-z\overline{z_0})^\beta)^2},
$$
where $K_1$ is the constant of integration. Appealing to the normalization condition we get $K_1=4$. Again integrating we get
$$f(z)=\overline{c} A_k(cz)
$$
as required.
\end{proof}
We observe that $w(x)=(2x+\sqrt{4-2k})/(1-x^2)$ is a solution of the 
differential equation  
$$w^{\prime}(x)=\frac{1}{2}w^2(x)+\frac{k}{(1-x^2)^2}
$$
on $(-1,1)$. Also, note that $w(0)=\sqrt{4-2k}$.  This motivates us   
to define a class similar to $\mathcal{N}_2(k)$ with
a normalization in the following way:
$$
\mathcal{M}_2(k)=\left\{f\in\mathcal{A}:(1-|z|^2)^2|S_f(z)|\le k,
~f''(0)=\sqrt{4-2k}\right\}, \quad 0\le k\le 2 .
$$
Note that if $k=2$, the class $\mathcal{M}_2(k)$ coincides with the Nehari class 
$\mathcal{N}_2(2)$. Now we give a result similar to Theorem~\ref{newlemma} for the class $\mathcal{M}_2(k)$
where the bound obtained is more simpler than Theorem~\ref{newlemma}.
\begin{lemma}\label{l1}
If $f\in \mathcal{M}_2(k)$, $0\le k\le 2$, then 
$$\left|T_f(z)\right|\le \frac{2|z|+\sqrt{4-2k}}{1-|z|^2}.
$$
Equality holds at a single $z\neq 0$ if and only if $f$ is a suitable rotation of $F_0(z)$, where
$$F_0(z)=\frac{e^{\displaystyle\sqrt{4-2k}\tanh^{-1}(z)}-1}{\sqrt{4-2k}}
=\frac{\ds\left(\frac{1+z}{1-z}\right)^b-1}{2b},
$$
where $b=\sqrt{4-2k}/2$.
Here, a suitable branch for $[(1+z)/(1-z)]^b$ is chosen 
so that $[(1+z)/(1-z)]^b=\exp(b\log[(1+z)/(1-z)])$ becomes analytic in $\mathbb{D}$.
\end{lemma}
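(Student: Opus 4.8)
The plan is to rerun the argument of Theorem~\ref{newlemma}, with the extremal curve replaced by the one selected by the normalization $f''(0)=\sqrt{4-2k}$. As a first step I would record the Riccati identity $T_f'(z)=\tfrac12 T_f^2(z)+S_f(z)$, valid for every $f\in\mathcal{A}$, so that $f\in\mathcal{M}_2(k)$ has $T_f(0)=f''(0)=\sqrt{4-2k}$, and I would verify (one line) that $w(x)=(2x+\sqrt{4-2k})/(1-x^2)$ solves the companion problem
$$w'(x)=\tfrac12 w^2(x)+\frac{k}{(1-x^2)^2},\qquad w(0)=\sqrt{4-2k}$$
on $(-1,1)$; equivalently one may linearise via $w=-2u'/u$ to Nehari's equation $u''+\tfrac{k}{2}(1-x^2)^{-2}u=0$ and read off the branch with the prescribed initial slope.

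The bound then comes exactly as in Theorem~\ref{newlemma}. Fixing $z_0$ with $|z_0|=1$ and setting $\psi(\tau)=|T_f(\tau z_0)|$, one notes that the zeros of $\psi$ are isolated (the only exception $f(z)\equiv z$ forces $k=2$ and is trivial since then $T_f\equiv 0$), and away from them, using $(1-\tau^2)^2|S_f(\tau z_0)|\le k$,
$$\frac{d}{d\tau}(\psi(\tau)-w(\tau))\le|T_f'(\tau z_0)|-w'(\tau)\le\tfrac12(\psi(\tau)^2-w(\tau)^2)=\tfrac12(\psi(\tau)-w(\tau))(\psi(\tau)+w(\tau)).$$
Since the new normalization gives $\psi(0)-w(0)=\sqrt{4-2k}-\sqrt{4-2k}=0$, Gr\"onwall's inequality yields $\psi(\tau)\le w(\tau)$ on $[0,1)$ (crossing a zero of $\psi$ only makes $\psi-w=-w<0$), hence $|T_f(z)|\le w(|z|)=(2|z|+\sqrt{4-2k})/(1-|z|^2)$.

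For sharpness I would check $F_0\in\mathcal{M}_2(k)$ directly: from $F_0'(z)=(1+z)^{b-1}(1-z)^{-b-1}$ one gets $F_0(0)=0$, $F_0'(0)=1$, and logarithmic differentiation gives $T_{F_0}(z)=\tfrac{b-1}{1+z}+\tfrac{b+1}{1-z}=\tfrac{2z+2b}{1-z^2}=\tfrac{2z+\sqrt{4-2k}}{1-z^2}$, so $F_0''(0)=\sqrt{4-2k}$ and $S_{F_0}(z)=T_{F_0}'(z)-\tfrac12 T_{F_0}^2(z)=k(1-z^2)^{-2}$; the constraint $(1-|z|^2)^2|S_{F_0}(z)|\le k$ then reduces to $1-|z|^2\le|1-z^2|$. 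On $(0,1)$ one has $|T_{F_0}(x)|=w(x)$, so $F_0$ (and, for $k=2$, a rotation of it) attains equality at a prescribed point. Conversely, if equality holds at $z_1\neq0$, then with $z_0=z_1/|z_1|$ one has $\psi(|z_1|)=w(|z_1|)$; since $\psi-w\le0$ vanishes at both $0$ and $|z_1|$ and satisfies the differential inequality above, it must vanish identically on $[0,1)$, so $|T_f(\tau z_0)|=w(\tau)$ throughout, and tracking equality back through the chain of estimates pins $T_f$ down up to a unimodular factor; two integrations with the normalization then identify $f$ as the claimed rotation of $F_0$.

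The Riccati and Gr\"onwall bookkeeping and the computations of $T_{F_0}$ and $S_{F_0}$ are routine, in fact lighter than in Theorem~\ref{newlemma} since $w$ is now rational. The delicate step is the rigidity in the converse: one must argue carefully that equality at a single interior point propagates to $\psi\equiv w$ on the whole radius---handling the zeros of $\psi$ and the simultaneous collapse of every inequality in the chain---and then that the pre-Schwarzian so obtained, continued analytically off the ray, is that of a rotation of $F_0$. Here one should also note that, in contrast with Theorem~\ref{newlemma}, the normalization $f''(0)=\sqrt{4-2k}$ is not rotation invariant when $k<2$: inspecting the equality conditions at the origin forces $z_0=1$ in that range, so there $f=F_0$ exactly, while for $k=2$ genuine rotations of $F_0$ reappear, which is the reason the statement allows a suitable rotation.
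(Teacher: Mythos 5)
Your proposal is correct and follows essentially the same route as the paper, which itself proves this lemma by repeating the Riccati--Gr\"onwall comparison argument of Theorem~\ref{newlemma} with the shifted initial condition $w(0)=\sqrt{4-2k}$ and the rational comparison function $w(x)=(2x+\sqrt{4-2k})/(1-x^2)$. Your added observation that for $k<2$ the normalization $f''(0)=\sqrt{4-2k}$ is not rotation invariant, so that the ``suitable rotation'' in the equality case must be trivial in that range, is a correct and worthwhile refinement of the stated equality condition.
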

\begin{figure}[H]
\includegraphics[width=5cm]{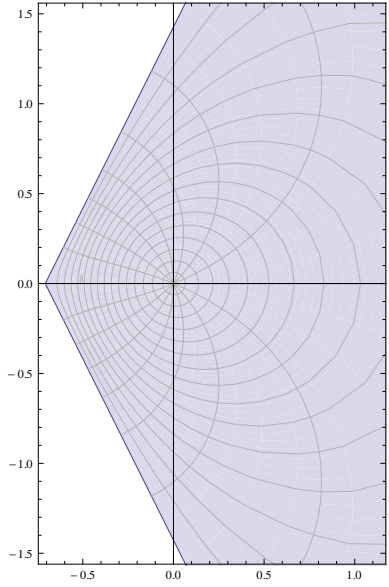}
\caption{Graph of the function $F_0(z)$ with $k=1$.} 
\end{figure}
\begin{proof}
A simple computation gives
$$T_f'(z)=\frac{1}{2}T_f^2(z)+S_f(z),\quad T_f(0)=\sqrt{4-2k}.
$$
Now, consider the initial value problem
$$w^{\prime}(x)=\frac{1}{2}w^2(x)+\frac{k}{(1-x^2)^2},\quad w(0)=\sqrt{4-2k}
$$
on $(-1,1)$. Note that it is satisfied by $w(x)=(2x+\sqrt{4-2k})/(1-x^2)$.
Now it is enough to prove that $|T_f(z)|\le w(|z|)$. This can be proved similar to the proof given in Theorem~\ref{newlemma}.
Finally, one can easily see that $F_0(z)\in \mathcal{M}_2(k)$ and the equality
$$\left|T_{F_0}(z)\right|=\frac{2|z|+\sqrt{4-2k}}{1-|z|^2}
$$
holds for $0\neq z\in \D$, with a suitable rotation of $F_0(z)$. 
The proof of only if part follows in the similar manner as in the proof of that of Theorem~\ref{newlemma}. 
\end{proof}
If we choose $k=2$ in Theorem~\ref{newlemma} and Lemma~\ref{l1}, we obtain the following well-known result.
\begin{corollary}\cite[Lemma~1]{CO94}\label{c1}
If $f\in \mathcal{N}_2(2)$ then 
$$\left|T_f(z)\right|\le \frac{2|z|}{1-|z|^2}.
$$
Equality holds at a single $z\neq 0$ if and only if $f$ is a rotation of
$$\frac{1}{2}\ln\frac{1+z}{1-z}.
$$
\end{corollary}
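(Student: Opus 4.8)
The plan is to obtain Corollary~\ref{c1} as the special case $k=2$ of either Theorem~\ref{newlemma} or Lemma~\ref{l1}, and to verify that the general bound collapses to the claimed simple form. First I would observe that when $k=2$ the parameter $\beta=\sqrt{1-(k/2)}=0$, so the function $A_k$ and the bound in \eqref{newlemma-e1} must be interpreted as limits. In Theorem~\ref{newlemma} the bound is $(2|z|-2\beta^2 A_k(|z|))/(1-|z|^2)$; since $A_k(|z|)$ stays bounded as $\beta\to 0^+$ (indeed $\beta A_k(z)=\frac{(1+z)^\beta-(1-z)^\beta}{(1+z)^\beta+(1-z)^\beta}\to 0$, hence $\beta^2 A_k(z)\to 0$), the term $2\beta^2 A_k(|z|)$ vanishes and the bound becomes exactly $2|z|/(1-|z|^2)$. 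Alternatively, and more transparently, I would use Lemma~\ref{l1}: when $k=2$ we have $\sqrt{4-2k}=0$, so $\mathcal{M}_2(2)=\mathcal{N}_2(2)$ and the bound $(2|z|+\sqrt{4-2k})/(1-|z|^2)$ is literally $2|z|/(1-|z|^2)$, with no limiting argument needed.

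Next I would identify the extremal function. In Lemma~\ref{l1} the extremal is $F_0(z)=(e^{\sqrt{4-2k}\tanh^{-1}(z)}-1)/\sqrt{4-2k}$; as $k\to 2$, writing $b=\sqrt{4-2k}/2\to 0$, the expression $\frac{e^{2b\tanh^{-1}(z)}-1}{2b}$ tends to $\tanh^{-1}(z)=\frac{1}{2}\ln\frac{1+z}{1-z}$ by the elementary limit $(e^{2bt}-1)/(2b)\to t$. This is precisely the function displayed in Corollary~\ref{c1}. Equivalently, from Theorem~\ref{newlemma} one checks that $\lim_{\beta\to 0}A_k(z)=\lim_{\beta\to 0}\frac{1}{\beta}\tanh(\beta\log(1+z)^{1/2}(1-z)^{-1/2}\cdot 2)$ — more cleanly, $A_k(z)=\frac{1}{\beta}\tanh\!\big(\tfrac{\beta}{1}\log\sqrt{\tfrac{1+z}{1-z}}\,\big)\cdot(\text{const})$, and since $\tanh(\beta u)/\beta\to u$ one gets $A_k(z)\to \frac12\log\frac{1+z}{1-z}$.

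Finally I would record the statement about equality. Since $A_k$ (resp.\ $F_0$) is the unique extremal in the corresponding theorem (resp.\ lemma) up to rotation, the $k=2$ limit $\frac12\ln\frac{1+z}{1-z}$ inherits this property: equality in $|T_f(z)|\le 2|z|/(1-|z|^2)$ at a single nonzero $z$ forces $f$ to be a rotation of $\frac12\ln\frac{1+z}{1-z}$. One small point worth remarking: the rotation here is genuine rotation (a unimodular factor $c$ with $f\mapsto \overline c f(cz)$), exactly as in the proof of Theorem~\ref{newlemma}; no separate argument is needed because the equality discussion there already covers the $\beta\to 0$ case verbatim.

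The main obstacle, such as it is, is purely the careful handling of the degenerate parameter value: one must make sure that the limits $\beta^2 A_k\to 0$ and $A_k\to \tfrac12\ln\frac{1+z}{1-z}$ (or $b\to 0$, $F_0\to \tfrac12\ln\frac{1+z}{1-z}$) are taken correctly and that the ``$\mathcal{N}_2(2)$'' normalization $f''(0)=0$ is the common value of the two normalizations in the limit $k=2$. Everything else is direct substitution, so the corollary really is immediate once the limiting forms are pinned down.
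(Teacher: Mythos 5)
Your proposal is correct and takes essentially the same route as the paper: Corollary~\ref{c1} is obtained there precisely by setting $k=2$ in Theorem~\ref{newlemma} and Lemma~\ref{l1}, with $\mathcal{M}_2(2)=\mathcal{N}_2(2)$ making the bound immediate. Your explicit handling of the degenerate limits ($\beta^2A_k\to 0$, $A_k\to\tfrac12\ln\tfrac{1+z}{1-z}$, and $F_0\to\tanh^{-1}z$ as $b\to 0$) is sound and in fact more careful than the paper, which performs the specialization without comment.
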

Similarly, the next result is stated as follows:
\begin{theorem}\label{sec2-lem4}
If $f\in \mathcal{N}_0(k)$, $0\le k\le \pi^2/2$, then 
$$\left|T_f(z)\right|\le \sqrt{2k}\tan\left(\sqrt{\frac{k}{2}}|z|\right).
$$
Equality holds at a single $z\neq 0$ if and only if $f$ is a rotation of $F_1(z)$, where
$$F_1(z)=\sqrt{\frac{2}{k}}\tan\left(\sqrt{\frac{k}{2}}z\right).$$
\end{theorem}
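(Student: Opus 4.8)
The plan is to run the scheme of Theorem~\ref{newlemma} with the weight $(1-|z|^2)^{-2}$ replaced by the constant weight $1$. Since $f\in\mathcal{N}_0(k)$ satisfies $f''(0)=0$, the identity
\[
T_f'(z)=\tfrac12 T_f^2(z)+S_f(z)
\]
carries the initial condition $T_f(0)=0$. So I first solve the scalar initial value problem
\[
w'(x)=\tfrac12 w^2(x)+k,\qquad w(0)=0,
\]
on $(-1,1)$. This is a Riccati equation with constant coefficients; separating variables and integrating $\int 2\,dw/(w^2+2k)$ gives $w(x)=\sqrt{2k}\,\tan(\sqrt{k/2}\,x)$. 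The essential observation is that this $w$ is finite (indeed real-analytic) on all of $[0,1)$: its first singularity is at $\sqrt{k/2}\,x=\pi/2$, i.e. at $|x|=\pi/\sqrt{2k}$, and the hypothesis $k\le\pi^2/2$ is exactly what forces $\pi/\sqrt{2k}\ge 1$. This is the only place the bound on $k$ enters.

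With $w$ available, the comparison step is verbatim the one in Theorem~\ref{newlemma}. Fix $z_0$ with $|z_0|=1$ and set $\psi(\tau)=|T_f(\tau z_0)|$; the zeros of $\psi$ are isolated unless $f(z)\equiv z$, and away from them $\psi$ is differentiable with $\psi'(\tau)\le|T_f'(\tau z_0)|$. Using $|S_f(\tau z_0)|\le k$ (the weight being $(1-|z|^2)^0=1$) one gets
\[
\frac{d}{d\tau}\bigl(\psi(\tau)-w(\tau)\bigr)\le\tfrac12\bigl(\psi(\tau)^2-w(\tau)^2\bigr)=\tfrac12\bigl(\psi(\tau)-w(\tau)\bigr)\bigl(\psi(\tau)+w(\tau)\bigr),
\]
and since $\psi(0)-w(0)=0$, Gr\"onwall's inequality yields $\psi(\tau)\le w(\tau)$, that is, $|T_f(z)|\le\sqrt{2k}\,\tan(\sqrt{k/2}\,|z|)$.

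For the equality assertion the forward implication is a direct computation: for $F_1(z)=\sqrt{2/k}\,\tan(\sqrt{k/2}\,z)$ one has $F_1'(z)=\sec^2(\sqrt{k/2}\,z)$, hence $T_{F_1}(z)=\sqrt{2k}\,\tan(\sqrt{k/2}\,z)$, and then
\[
S_{F_1}(z)=T_{F_1}'(z)-\tfrac12 T_{F_1}^2(z)=k\sec^2(\sqrt{k/2}\,z)-k\tan^2(\sqrt{k/2}\,z)=k;
\]
in particular $F_1''(0)=0$, so $F_1\in\mathcal{N}_0(k)$ and equality holds along the positive ray, hence (after the rotation $\theta=-\arg z$) at any prescribed $z\neq0$. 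Conversely, if equality holds at some $z_1\neq0$, put $z_0=z_1/|z_1|$, so that $\psi(|z_1|)=w(|z_1|)$. Exactly as in Theorem~\ref{newlemma}: the reversed differential inequality shows $w-\psi\ge0$ is non-decreasing, and it vanishes at the two points $0$ and $|z_1|$, so $\psi\equiv w$ on $[0,|z_1|]$; tracing back the chain of equalities (and using that $w$ vanishes only at $0$) forces the argument of $T_f(\tau z_0)$ to be a constant $e^{i\theta}$ there, and the identity theorem upgrades this to $T_f(z)=c\,w(z\overline{z_0})$ on $\D$ with $|c|=1$. Integrating $T_f=(\log f')'$ and using
\[
\int_0^z\sqrt{2k}\,\tan(\sqrt{k/2}\,\overline{z_0}\,\zeta)\,d\zeta=-\frac{2}{\overline{z_0}}\,\log\cos(\sqrt{k/2}\,\overline{z_0}\,z)
\]
together with the normalization $f'(0)=1$, the choice $c=\overline{z_0}$ gives $f'(z)=\sec^2(\sqrt{k/2}\,\overline{z_0}\,z)$; a further integration with $f(0)=0$ then gives $f(z)=z_0\,F_1(\overline{z_0}\,z)$, a rotation of $F_1$.

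I expect the rigidity half of the equality case to be the only real obstacle: turning the single interior coincidence $\psi(|z_1|)=w(|z_1|)$ into the global analytic identity for $T_f$. This needs the reversed differential inequality to obtain monotonicity of $w-\psi$, the elementary fact that a non-negative non-decreasing function vanishing at an interior point is zero up to that point, the extraction of a constant argument for $T_f(\tau z_0)$ from the forced equalities $\psi'=|T_f'|$ and $|T_f'|=\tfrac12|T_f|^2+k$, and finally an appeal to the identity theorem on $\D$ (the extension of $w$ being analytic on $\D$ precisely because $\pi/\sqrt{2k}\ge1$). All of this is structurally identical to the corresponding part of Theorem~\ref{newlemma}, so beyond it the work is only the bookkeeping of the trigonometric antiderivative and the rotation constant recorded above.
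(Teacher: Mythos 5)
Your proposal is correct and follows essentially the same route as the paper: the Riccati identity $T_f'=\tfrac12T_f^2+S_f$ with $T_f(0)=0$, comparison with the scalar IVP $w'=\tfrac12w^2+k$, $w(0)=0$ via Gr\"onwall along rays, and the rigidity argument of Theorem~\ref{newlemma} for the equality case. You in fact supply details the paper leaves implicit --- where the hypothesis $k\le\pi^2/2$ is used (finiteness of $w=\sqrt{2k}\tan(\sqrt{k/2}\,x)$ on $[0,1)$, which is also the correct solution; the paper's proof misprints it as $\sqrt{2/k}\tan(\sqrt{k/2}\,x)$), the verification that $S_{F_1}\equiv k$, and the explicit integrations in the converse --- so no changes are needed.
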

\begin{proof}
A simple computation gives
$$T_f^{\prime}(z)=\frac{1}{2}T_f^2(z)+S_f(z),\quad T_f(0)=0.
$$
Now, consider the initial value problem 
$$w^{\prime}(x)=\frac{1}{2}w^2(x)+k,\quad w(0)=0
$$
on $(-1,1)$. Clearly $w(x)=\sqrt{{2}/{k}}\tan\Big(\sqrt{{k}/{2}}\,x\Big)$ 
is a solution of the initial value problem.
Now it remains to show that $|T_f(z)|\le w(|z|)$ which follows in the similar way given in the proof of Theorem~\ref{newlemma}. 
One can easily see that the equality holds for $F_1(z)$ defined in the statement of the lemma and can be proved in the same technique used in Theorem~\ref{newlemma}.
\end{proof}
\begin{figure}[H]
\includegraphics[width=5cm]{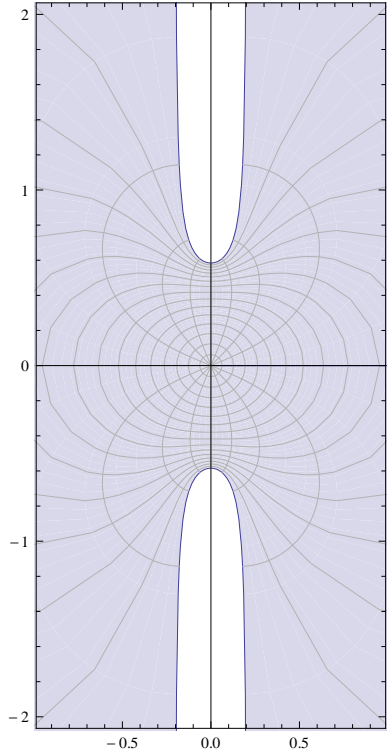}
\caption{Graph of the function $F_1(z)$ with $k=\pi^2/2$.} 
\end{figure}

\begin{corollary}\label{cor2.4}
If $f\in \mathcal{N}_0(\pi^2/2)$ then 
$$
\left|T_f(z)\right|\le\pi\tan\left({\frac{\pi}{2}|z|}\right).
$$
The equality holds at a single $z\neq 0$ if and only if $f$ is a rotation of
$$\frac{2}{\pi}\tan\left(\frac{\pi}{2}z\right).
$$
\end{corollary}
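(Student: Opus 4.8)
The plan is to obtain this as the borderline case $k=\pi^2/2$ of Theorem~\ref{sec2-lem4}. First I would note that $\pi^2/2$ lies at the right endpoint of the admissible range $0\le k\le \pi^2/2$, so the theorem applies to every $f\in\mathcal{N}_0(\pi^2/2)$. Then I would simply evaluate the constants that appear in its conclusion: with $k=\pi^2/2$ one has $\sqrt{2k}=\sqrt{\pi^2}=\pi$ and $\sqrt{k/2}=\sqrt{\pi^2/4}=\pi/2$, so the estimate $|T_f(z)|\le \sqrt{2k}\tan\big(\sqrt{k/2}\,|z|\big)$ becomes exactly $|T_f(z)|\le \pi\tan\big(\tfrac{\pi}{2}|z|\big)$, which is the asserted bound.

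For the equality statement, the same substitution into the extremal function $F_1(z)=\sqrt{2/k}\,\tan\big(\sqrt{k/2}\,z\big)$ from Theorem~\ref{sec2-lem4} gives $F_1(z)=\sqrt{4/\pi^2}\,\tan\big(\tfrac{\pi}{2}z\big)=\tfrac{2}{\pi}\tan\big(\tfrac{\pi}{2}z\big)$, and the ``equality at a single $z\neq0$ iff $f$ is a rotation of $F_1$'' characterization is inherited verbatim. There is essentially no obstacle here; the only point worth a moment's care is that $\tan\big(\tfrac{\pi}{2}|z|\big)$ is finite on $\mathbb{D}$, which holds since $|z|<1$ keeps the argument strictly below $\pi/2$, so that $\tfrac{2}{\pi}\tan\big(\tfrac{\pi}{2}z\big)$ is analytic in $\mathbb{D}$ and the right-hand side of the estimate is well defined throughout the disk.
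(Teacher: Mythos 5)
Your proposal is correct and is exactly the paper's intended derivation: the corollary is stated as an immediate specialization of Theorem~\ref{sec2-lem4} at $k=\pi^2/2$, and your substitutions $\sqrt{2k}=\pi$, $\sqrt{k/2}=\pi/2$, $\sqrt{2/k}=2/\pi$ reproduce both the bound and the extremal function. Nothing further is needed.
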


Next we present a similar result for functions in the class $\mathcal{N}_1(k)$. Since we use the same technique
and it involves solution of a differential equation, as a supplementary result we see that the solution of the
differential equation is a Gaussian hypergeometric function. 
We denote by $F(a,b;c;z)$ the Gaussian hypergeometric function
defined by
$$F(a,b;c;z)=\sum_{n=0}^\infty\frac{(a)_n(b)_n}{(c)_n(1)_n}z^n,\quad
z\in\D,
$$
where $(a)_0=1$, $(a)_n=a(a+1)\cdots(a+n-1)$ is the Pochhammer symbol
and $c\neq 0,-1,-2,\ldots$. We have the well-known derivative formula
$$F'(a,b;c;z)=\frac{d}{dz}F(a,b;c;z)= \frac{ab}{c}F(a+1,b+1;c+1;z).
$$
\begin{theorem}\label{sec2-thm6}
A solution of the differential equation 
$$w^{\prime}(z)=\frac{1}{2}w^2(z)+\frac{k}{1-z^2},\quad 0\le k\le 4,
$$
can be represented by $\ds w(z)=\int_0^1 \frac{kz}{1-tz^2} d\mu(t)$, 
where $\mu(t):[0,1]\to[0,1]$ is a non- decreasing function with $\mu(1)-\mu(0)=1$. 
In particular, $|w(z)|\le w(|z|)$.
\end{theorem}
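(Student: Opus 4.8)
The equation for $w$ is of Riccati type, so the first step is to linearize it. I would set $w(z)=-2u'(z)/u(z)$, where $u$ is normalized by $u(0)=1$ and $u'(0)=0$; this normalization of $u$ corresponds exactly to the solution $w$ with $w(0)=0$, which is the one compatible with the asserted representation. A short computation shows that $w'=\tfrac12 w^2+k/(1-z^2)$ is equivalent to the linear second-order equation
$$u''(z)+\frac{k}{2(1-z^2)}\,u(z)=0 ,$$
and, conversely, $-2u'/u$ solves the Riccati equation for any nonvanishing solution $u$ of this linear equation. So it is enough to analyze the linear equation with its initial data.

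Next I would identify $u$ as a hypergeometric function. Writing $u(z)=y(z^2)$ and putting $s=z^2$, the equation becomes the Gauss hypergeometric equation $s(1-s)y''+\bigl(\tfrac12-\tfrac{s}{2}\bigr)y'+\tfrac{k}{8}\,y=0$, i.e.\ the equation with $c=\tfrac12$, $a+b=-\tfrac12$, $ab=-\tfrac{k}{8}$; explicitly $a=\tfrac14(\sqrt{1+2k}-1)$ and $b=-\tfrac14(\sqrt{1+2k}+1)$, so that for $0\le k\le 4$ one has $a\in[0,\tfrac12]$ and $b\in[-1,-\tfrac12]$. The normalized even solution is $u(z)=F(a,b;\tfrac12;z^2)$, and the derivative formula gives $u'(z)=4ab\,z\,F(a+1,b+1;\tfrac32;z^2)$. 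Since $-8ab=k$, this yields
$$w(z)=\frac{kz\,F(a+1,b+1;\tfrac32;z^2)}{F(a,b;\tfrac12;z^2)} ,$$
which realizes the solution as a ratio of Gaussian hypergeometric functions.

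It then remains to show that $g(s):=F(a+1,b+1;\tfrac32;s)/F(a,b;\tfrac12;s)$ is a Cauchy--Stieltjes transform: $g(s)=\int_0^1 d\mu(t)/(1-ts)$ for a nonnegative measure $\mu$ on $[0,1]$ with total mass $g(0)=1$. Granting this, the substitution $s=z^2$ followed by multiplication by $kz$ gives the stated representation of $w$, and $|w(z)|\le w(|z|)$ is then immediate from $\mu\ge0$ together with $|1-tz^2|\ge 1-t|z|^2>0$ for $t\in[0,1]$, $|z|<1$. To obtain the representation I would first note the sign data: since $b\le 0$, the Taylor coefficients of $F(a,b;\tfrac12;s)$ are nonpositive from degree one on, so $F(a,b;\tfrac12;s)=1-h(s)$ with $h$ having nonnegative coefficients and $h(1)=1-F(a,b;\tfrac12;1)=1-\Gamma(\tfrac12)\big/\bigl(\Gamma(\tfrac12-a)\Gamma(\tfrac12-b)\bigr)<1$ when $k<4$ (the series converging because its coefficients are $O(n^{-2})$); hence $F(a,b;\tfrac12;\cdot)$ is zero-free on $\overline{\D}$ and $g$ is holomorphic in $\D$, the borderline case $k=4$ (where $F(a,b;\tfrac12;s)=1-s$ and $\mu=\delta_1$) being recovered by continuity. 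To produce $\mu$ I would appeal to Gauss's continued fraction for $g$: its successive partial numerators are positive and bounded (tending to $\tfrac14$), whence the classical Stieltjes theory of such continued fractions gives $g(s)=\int_0^1 d\mu(t)/(1-ts)$ with $\mu\ge0$, the boundedness of the partial numerators confining $\mathrm{supp}\,\mu$ to $[0,1]$; the normalization $g(0)=1$ then forces $\mu([0,1])=1$. Alternatively one may verify that $g$ extends to a Pick function holomorphic on $\IC\setminus[1,\infty)$ --- equivalently, that the zeros of $F(a,b;\tfrac12;\cdot)$ all lie in $[1,\infty)$ and interlace with those of $F(a+1,b+1;\tfrac32;\cdot)$ --- and then invoke the Nevanlinna representation.

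The algebra in the first two steps (linearization, matching the hypergeometric parameters, the derivative identity) is routine, and so is the concluding inequality. The crux --- and the step I expect to require the most care --- is the last one: showing that the hypergeometric quotient $g$ is the Cauchy--Stieltjes transform of a probability measure on $[0,1]$, that is, controlling the Gauss continued fraction (equivalently, the zero set of $F(a,b;\tfrac12;\cdot)$) precisely enough to secure both the positivity of $\mu$ and the localization of its support to $[0,1]$.
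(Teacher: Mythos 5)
Your proposal is correct in its overall architecture and, for the first two thirds, coincides with the paper's proof: the same Riccati linearization $w=-2u'/u$ leading to $u''+\tfrac{k}{2(1-z^2)}u=0$, the same identification $u(z)=F(a,b;\tfrac12;z^2)$ with $a=\tfrac14(\sqrt{1+2k}-1)$, $b=-\tfrac14(\sqrt{1+2k}+1)$ (the paper lists the parameters in the opposite order, which is immaterial by symmetry of $F$), the same use of the derivative formula with $-8ab=k$, and the same concluding inequality $|w(z)|\le w(|z|)$ via $|1-tz^2|\ge 1-t|z|^2$. Where you diverge is at the crux: the paper does not prove that the hypergeometric quotient is a Cauchy--Stieltjes transform of a probability measure on $[0,1]$; it simply cites K\"ustner's Theorem~1.5 (and Lemma~7 of Choi--Kim--Ponnusamy--Sugawa) for exactly this ratio in exactly this parameter range $0<k\le 4$. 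You instead propose to re-derive that representation from Gauss's continued fraction and Stieltjes/Wall theory (or via a Pick-function and zero-interlacing argument). That is a legitimate and self-contained route --- indeed it is essentially how K\"ustner's result is proved --- but as written it is a plan rather than a proof, and the one place needing real care is slightly misstated: the classical Gauss continued fraction is for the singly contiguous ratio $F(a,b+1;c+1;s)/F(a,b;c;s)$, whereas your $g(s)=F(a+1,b+1;\tfrac32;s)/F(a,b;\tfrac12;s)$ shifts both upper parameters. To get a genuine $g$-fraction for $g$ you must either compose two Gauss fractions or pass through the contiguous relations, and then verify positivity and boundedness of the resulting partial numerators for $a\in[0,\tfrac12]$, $b\in[-1,-\tfrac12]$, $c=\tfrac12$; this verification is precisely the content of the cited theorem, so you would be reproving it rather than shortcutting it. Your sanity checks (the endpoint case $k=4$ giving $F(a,b;\tfrac12;s)=1-s$, $\mu=\delta_1$, $w(z)=4z/(1-z^2)$, matching Corollary~\ref{sec2-cor8}) are correct and are a nice addition not present in the paper.
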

\begin{proof}
The result is trivial if $k=0$. Now assume that $0<k\le 4$.
Let a solution of the differential equation 
$$w^{\prime}(z)=\frac{1}{2}w^2(z)+\frac{k}{1-z^2}
$$
be of the form $w(z)={-2u^{\prime}(z)}/{u(z)}$. Then $u(z)$ is a solution of the 
second order linear differential equation
$$u^{\prime\prime}+\frac{k}{2(1-z^2)}u=0.
$$
It can easily be verified that this differential equation is satisfied by
$$u(z)=F[(-1/4) (1+\sqrt{1+2k}), (1/4) (-1+\sqrt{1+2k}); 1/2; z^2], \quad |z|<1.
$$
Note that the series solution method can also produce two linearly independent 
solutions where the above 
hypergeometric representation of $u(z)$ is one of them. 
Hence, the required solution is
\begin{align*}
w(z) & =kz\left(\frac{F[(-1/4) (-3+\sqrt{1+2k}), (1/4) (3+\sqrt{1+2k}); 3/2; z^2]}{F[(-1/4) 
(1+\sqrt{1+2k}), (1/4) (-1+\sqrt{1+2k}); 1/2; z^2]}\right)\\
&=\int_0^1 \frac{kz}{1-tz^2} d\mu(t)
\end{align*}
for a non-decreasing function $\mu:[0,1]\to [0,1]$ with $\mu(1)-\mu(0)=1$,
where the second equality follows from \cite[Theorem~1.5]{Kus02} (see also 
\cite[Lemma~7]{CKPS05}) since $0<k\le 4$.

Finally, it follows that 
$$
|w(z)|\le \int_0^1 \frac{k|z|}{|1-tz^2|} d\mu(t)\le \int_0^1 \frac{k|z|}{1-t|z|^2} d\mu(t)=w(|z|),
$$
completing the proof.
\end{proof}

Now we can estimate the pre-Schwarzian derivative of a function $f$ in $\mathcal{N}_1(k)$.

\begin{theorem}\label{sec2-lem7}
If $f\in \mathcal{N}_1(k)$, $0\le k\le 4$, then 
$$\left|T_f(z)\right|\le k|z|\left(\frac{F[(-1/4) (-3+\sqrt{1+2k}), 
(1/4) (3+\sqrt{1+2k}); 3/2; |z|^2]}{F[(-1/4) (1+\sqrt{1+2k}), (1/4) (-1+\sqrt{1+2k}); 1/2; |z|^2]}\right).
$$
Equality holds at a single $z\neq 0$ if and only if $f$ is a rotation of $F_2(z)$, where
$$F_2(z)=\int_0^z\frac{1}{(F[(-1/4) (1+\sqrt{1+2k}), (1/4) (-1+\sqrt{1+2k}); 1/2; t^2])^2}\, dt.
$$
\end{theorem}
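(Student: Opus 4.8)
The plan is to follow the template of Theorem~\ref{newlemma}, the role of the explicit solution of the associated initial value problem now being played by the hypergeometric function produced in Theorem~\ref{sec2-thm6}. For any $f\in\mathcal{A}$ with $f''(0)=0$ one has the Riccati identity $T_f'(z)=\frac{1}{2}T_f^2(z)+S_f(z)$ with $T_f(0)=0$, so I would compare $T_f$ along rays with the solution $w$ on $(-1,1)$ of
$$w'(x)=\frac{1}{2}w^2(x)+\frac{k}{1-x^2},\qquad w(0)=0.$$
By Theorem~\ref{sec2-thm6} this $w$ is precisely
$$w(x)=kx\,\frac{F[(-1/4)(-3+\sqrt{1+2k}),(1/4)(3+\sqrt{1+2k});3/2;x^2]}{F[(-1/4)(1+\sqrt{1+2k}),(1/4)(-1+\sqrt{1+2k});1/2;x^2]};$$
the integral representation there forces $w(0)=0$ and shows that $w$ is real-analytic, nonnegative and finite throughout $[0,1)$, so it does not blow up before the boundary, and its value at $x=|z|$ is the right-hand side of the claimed estimate.

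The comparison itself is the routine part and is identical to that in Theorem~\ref{newlemma}. Fixing $z_0$ with $|z_0|=1$ and putting $\psi(\tau)=|T_f(\tau z_0)|$ for $0\le\tau<1$, the zeros of $\psi$ are isolated unless $f(z)\equiv z$, and off those zeros $\psi$ is differentiable with $\psi'(\tau)\le|T_f'(\tau z_0)|$. Using the membership bound in the form $(1-\tau^2)|S_f(\tau z_0)|\le k$ — this is exactly what pairs with the term $k/(1-x^2)$ of the ODE, and is where the exponent $\alpha=1$ is used — one obtains
$$\frac{d}{d\tau}\bigl(\psi(\tau)-w(\tau)\bigr)\le|T_f'(\tau z_0)|-w'(\tau)\le\frac{1}{2}\bigl(\psi(\tau)-w(\tau)\bigr)\bigl(\psi(\tau)+w(\tau)\bigr).$$
Since $\psi(0)-w(0)=0$ and $\psi+w\ge0$, Gr\"onwall's inequality gives $\psi(\tau)\le w(\tau)$, i.e.\ $|T_f(z)|\le w(|z|)$, which is the asserted bound.

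For the equality statement I would first check $F_2\in\mathcal{N}_1(k)$. Writing $u(z)=F[(-1/4)(1+\sqrt{1+2k}),(1/4)(-1+\sqrt{1+2k});1/2;z^2]$ one has $F_2'(z)=u(z)^{-2}$, hence $T_{F_2}=-2u'/u=w$ — the substitution already used in Theorem~\ref{sec2-thm6} — and therefore $S_{F_2}=w'-\frac{1}{2}w^2=k/(1-z^2)$ by the ODE; consequently
$$(1-|z|^2)\,|S_{F_2}(z)|=\frac{k(1-|z|^2)}{|1-z^2|}\le k$$
because $|1-z^2|\ge1-|z|^2$, while $u(0)=1$ and $u'(0)=0$ give $F_2(0)=0$, $F_2'(0)=1$, $F_2''(0)=0$. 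Thus $|T_{F_2}(z)|=|w(z)|$ equals $w(|z|)$ along the positive axis, and the rotation $e^{-i\phi}F_2(e^{i\phi}z)$ with $\phi=-\arg z_1$ moves the equality point to any prescribed $z_1\ne0$. Conversely, if equality holds at some $z_1\ne0$, take $z_0=z_1/|z_1|$; then $\psi(|z_1|)=w(|z_1|)$, which through the chain of inequalities above forces $\psi\equiv w$ along that ray, so that $T_f^2$ and $S_f$ are aligned there and $(1-\tau^2)|S_f(\tau z_0)|\equiv k$. Exactly as in Theorem~\ref{newlemma} this propagates, by the identity theorem, to $T_f(z)=c\,w(z\overline{z_0})$ on $\D$ with $|c|=1$; one further integration, with the constant fixed by the normalization $f'(0)=1$, identifies $f$ as a rotation of $F_2$.

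Both the Gr\"onwall comparison and the computation $S_{F_2}=k/(1-z^2)$ are routine. The points deserving care are that the hypergeometric ratio really is the $w(0)=0$ solution and that it stays finite throughout $[0,1)$ — both handed to us by Theorem~\ref{sec2-thm6} and its integral representation — and, the main obstacle, the rigidity half of the converse: upgrading equality at a single interior point to $\psi\equiv w$ along the ray and then to the global identity $T_f(z)=c\,w(z\overline{z_0})$, which is done word for word as in the proof of Theorem~\ref{newlemma}.
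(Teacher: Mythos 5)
Your proposal is correct and follows essentially the same route as the paper: the Riccati identity for $T_f$, a Gr\"onwall comparison along rays with the solution $w$ of the initial value problem whose hypergeometric form and positivity come from Theorem~\ref{sec2-thm6}, and the equality analysis via the extremal function $F_2$ with $T_{F_2}=-2u'/u=w$. You simply spell out the details (the verification $S_{F_2}=k/(1-z^2)$, the normalization, and the rigidity step) that the paper delegates to the proofs of Theorems~\ref{newlemma} and \ref{sec2-lem4}.
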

\begin{proof}
An easy computation gives that
$$T_f^{\prime}(z)=\frac{1}{2}T_f^2(z)+S_f(z),\quad T_f(0)=0.
$$
Consider the initial value problem
$$w^{\prime}(x)=\frac{1}{2}w^2(x)+\frac{k}{1-x^2},\quad w(0)=0
$$
on $(-1,1)$. Use Theorem~\ref{sec2-thm6} and proceed 
in the same manner as in the proof of Theorem~\ref{sec2-lem4}. We can easily show that $|T_f(z)|\le w(|z|)$. 
Equality can also be verified easily by considering the function $F_2(z)$ defined in the statement.
\end{proof}

\begin{figure}[H]
\includegraphics[width=5cm]{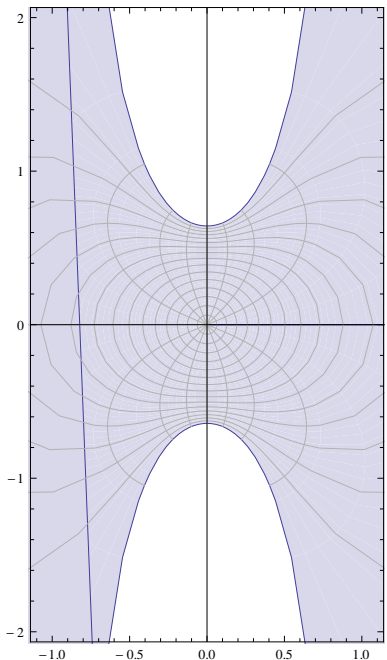}
\caption{Graph of the function $F_2(z)$ with $k=4$.} 
\end{figure}

As a consequence of Theorem~\ref{sec2-lem7}, we obtain
\begin{corollary}\label{sec2-cor8}
If $f\in \mathcal{N}_1(4)$ then 
$$\left|T_f(z)\right|\le \frac{4|z|}{1-|z|^2}.
$$
Equality holds at a single $z\neq 0$ if and only if $f$ is a rotation of
$$\frac{1}{4}\left(\frac{2z}{1-z^2}+\ln\frac{1+z}{1-z}\right).
$$
\end{corollary}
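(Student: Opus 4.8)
The plan is to obtain the corollary simply by setting $k=4$ in Theorem~\ref{sec2-lem7} and observing that the two Gaussian hypergeometric functions appearing there degenerate to polynomials, after which both the bound and the extremal function reduce to elementary closed forms.

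First I would record that for $k=4$ one has $\sqrt{1+2k}=3$, so the parameters occurring in Theorem~\ref{sec2-lem7} simplify: the numerator becomes $F[0,3/2;3/2;|z|^2]$ and the denominator becomes $F[-1,1/2;1/2;|z|^2]$. Because $(0)_n=0$ for every $n\ge 1$, the numerator collapses to the constant $1$; because $(-1)_n=0$ for every $n\ge 2$, the denominator collapses to the linear polynomial $1-|z|^2$ (indeed $F(-1,1/2;1/2;x)=1-x$). Feeding these into the estimate of Theorem~\ref{sec2-lem7} gives
$$|T_f(z)|\le 4|z|\cdot\frac{1}{1-|z|^2}=\frac{4|z|}{1-|z|^2},$$
which is the asserted inequality.

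Next, for the equality statement I would apply the same simplification to the extremal function $F_2$ of Theorem~\ref{sec2-lem7}: its integrand $1/(F[-1,1/2;1/2;t^2])^2$ becomes $1/(1-t^2)^2$, so $F_2(z)=\int_0^z dt/(1-t^2)^2$. Evaluating this elementary integral by partial fractions — equivalently, by checking that $\frac{d}{dt}\big(\frac{t}{1-t^2}+\frac12\ln\frac{1+t}{1-t}\big)=\frac{2}{(1-t^2)^2}$ — yields $F_2(z)=\frac{z}{2(1-z^2)}+\frac14\ln\frac{1+z}{1-z}=\frac14\big(\frac{2z}{1-z^2}+\ln\frac{1+z}{1-z}\big)$, which is exactly the function in the statement. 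The assertion that equality holds at a single $z\neq 0$ precisely when $f$ is a rotation of this function is then inherited verbatim from Theorem~\ref{sec2-lem7}.

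There is no real obstacle here. The only points needing care are noticing that the leading parameters $0$ and $-1$ force the hypergeometric series to terminate (so that no questions of convergence or of the choice of branch intervene), and correctly carrying out the antiderivative above; both are routine.
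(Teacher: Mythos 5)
Your proposal is correct and is exactly the intended derivation: the paper states the corollary as an immediate consequence of Theorem~\ref{sec2-lem7}, and your specialization $k=4$, $\sqrt{1+2k}=3$, with the terminating hypergeometric series $F[0,3/2;3/2;x]=1$ and $F[-1,1/2;1/2;x]=1-x$, together with the antiderivative computation for $F_2$, is the same (routine) verification. No issues.
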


\section{Schwarzian derivative and John domains}\label{sec3}
This section is devoted to the study of functions in Nehari-type classes.
We begin with the definition of John domain. {\em John domains} in the Euclidean 
$n$-space $\mathbb{R}^n$ which were introduced by John \cite{Jo61} 
in connection with his work on elasticity. 
The term ``John domain" is due to Martio and Sarvas \cite{MS78}
while studying certain injectivity theorems for functions defined on some special plane domains
in terms of the Schwarzian 
derivative (see for instance \cite[Theorem~4.14 and Theorem~4.24]{MS78}).
Bounded John domains are characterized by the following geometric fact: a bounded domain 
$D\subset\mathbb{C}$ is a John domain if and only if there is a 
constant $a>0$ such that for every (straight) {\em crosscut} $C$ of $D$ the inequality
$${\rm diam}\, H\le a\, {\rm diam}\, C
$$
holds for one of the components $H$ of $D\setminus C$. 
Here ``{\rm diam}'' denotes the Euclidean diameter. 

We shall take help of the following well-known characterization to find necessary conditions 
for $f(\D)$ to be John domains while the function $f$ belongs to the Nehari-type families
$\mathcal{N}_\alpha(k)$ defined in the first section.

\begin{lemma}\cite[Lemma~2]{COP96}\label{cop-l2}
Let $f$ be analytic and univalent in $\D$. Then $f(\D)$ is a John domain 
if and only if there exists $0<x<1$ such that
$$ \sup_{|\zeta|=1} \sup_{r<1} \frac{(1-\rho^2)|f^\prime(\rho\zeta)|}{(1-r^2)|f^\prime(r\zeta)|}<1, 
\quad \rho=\frac{x+r}{1+xr}.
$$
\end{lemma}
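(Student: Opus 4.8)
\medskip
\noindent\textbf{Proof strategy.}
The plan is to convert the statement into geometry by the Koebe distortion theorem and then to read the John property off the crosscut characterisation recalled just above. Write $\Omega=f(\D)$, $d(z)={\rm dist}(f(z),\partial\Omega)$ and $\phi(z)=(1-|z|^2)|f'(z)|$. Since $f$ is univalent, the Koebe distortion theorem gives $\tfrac14\phi(z)\le d(z)\le\phi(z)$, so the quotient in the statement, which is exactly $\phi(\rho\zeta)/\phi(r\zeta)$, is comparable, with absolute constants, to $d(\rho\zeta)/d(r\zeta)$. A short computation gives $1-\rho=(1-x)(1-r)/(1+xr)$ when $\rho=(x+r)/(1+xr)$, so $r\zeta$ and $\rho\zeta$ lie on the same radius at the fixed hyperbolic distance $h:=\tanh^{-1}x$, and $(1-\rho)/(1-r)\le 1-x<1$. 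Thus the supremum in the statement is $<1$ precisely when there is a $q<1$ with $\phi(\rho\zeta)\le q\,\phi(r\zeta)$ for all $\zeta$ and all $r$, and the task is to see that this ``uniform geometric decay of $\phi$ under one hyperbolic step outward along a radius'' is equivalent to $\Omega$ being a John domain.

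For the ``if'' direction, fix $\zeta$ and a base point $r_*\in[0,1)$; put $t_0=r_*$ and $\tanh^{-1}t_{n+1}=\tanh^{-1}t_n+h$, and iterate the hypothesis to get $\phi(t_n\zeta)\le q^{\,n}\phi(r_*\zeta)$. The Koebe theorem on the hyperbolic disk of radius $h$ about $t_n\zeta$ bounds $|f'(\sigma\zeta)|$ for $\sigma\in[t_n,t_{n+1}]$ by a fixed multiple of $(1-\sigma^2)^{-1}\phi(t_n\zeta)$; since $\int_{t_n}^{t_{n+1}}(1-\sigma^2)^{-1}\,d\sigma=h$, summing the geometric series yields
$$\int_{r_*}^{1}|f'(\sigma\zeta)|\,d\sigma\le C_1\,\phi(r_*\zeta)\le 4C_1\,d(r_*\zeta),$$
with $C_1=C_1(x,q)$ independent of $\zeta$ and $r_*$. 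Taking $r_*=0$ shows $\Omega$ is bounded. Now let $C$ be a crosscut of $\Omega$; since $f^{-1}$ is a homeomorphism onto $\D$, the Jordan arc $\gamma=f^{-1}(C)$ separates $\D$ into $\widetilde H_1\ni 0$ and $\widetilde H_2$, with $f(\widetilde H_i)$ the two components of $\Omega\setminus C$. Each $z\in\widetilde H_2$ lies on a radius whose last meeting with $\gamma$ before $z$ is a point $r_*\zeta_z$ with $r_*<|z|$, so
$$|f(z)-f(r_*\zeta_z)|\le\int_{r_*}^{|z|}|f'(\sigma\zeta_z)|\,d\sigma\le 4C_1\,d(r_*\zeta_z)\le 4C_1\,{\rm diam}\,C,$$
the last step because $f(r_*\zeta_z)\in C$ while an endpoint of $\overline C$ lies on $\partial\Omega$. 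Hence $f(\widetilde H_2)$ lies within $4C_1\,{\rm diam}\,C$ of $\overline C$, so ${\rm diam}\,f(\widetilde H_2)\le(1+8C_1)\,{\rm diam}\,C$, and the crosscut characterisation shows $\Omega$ is a John domain.

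For the ``only if'' direction, assume $\Omega$ is a John domain. I would then invoke the standard distortion estimate for a conformal map onto a John domain (see \cite{NV91}): there exist $c\ge 1$ and $\alpha\in(0,1]$, depending only on the John constant, with
$$d(\rho\zeta)\le c\Bigl(\tfrac{1-\rho}{1-r}\Bigr)^{\!\alpha}d(r\zeta),\qquad 0\le r\le\rho<1,\ |\zeta|=1 .$$
Together with $(1-\rho)/(1-r)\le 1-x$ from the first paragraph, this gives $d(\rho\zeta)\le c(1-x)^{\alpha}d(r\zeta)$ when $\rho=(x+r)/(1+xr)$, whence $\phi(\rho\zeta)\le 4c(1-x)^{\alpha}\phi(r\zeta)$ by Koebe. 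Choosing $x$ so close to $1$ that $4c(1-x)^{\alpha}<1$ makes the supremum in the statement at most $4c(1-x)^{\alpha}<1$, as required. (The displayed estimate can itself be proved by joining $f(\rho\zeta)$ to the John centre by a John curve, comparing it hyperbolically with the sub-radius $[r\zeta,\rho\zeta]$ and applying Koebe, or via the Gehring--Hayman theorem together with the John-curve property of radial images.)

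The main obstacle is the ``only if'' direction, specifically the displayed distortion estimate: this is the one step that uses the full strength of the John condition --- rather than mere boundedness or rectifiability of radial images --- and it is what provides the genuine polynomial-in-$(1-|z|)$ decay of $d(r\zeta)$ along radii that the Koebe theorem by itself does not give (Koebe bounds the oscillation of $\phi$ but yields no decay). The ``if'' direction, by contrast, is essentially bookkeeping with a geometric series once one uses Koebe to pass from the hyperbolic lattice $\{t_n\zeta\}$ to the whole radius.
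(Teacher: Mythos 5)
The paper does not prove this lemma at all --- it is quoted verbatim from \cite{COP96} --- so there is no in-paper argument to compare yours against; I am judging it on its own terms. Your ``if'' direction is complete and correct: the identification of the quotient with $\phi(\rho\zeta)/\phi(r\zeta)$, the computation showing that $r\mapsto\rho$ is a fixed hyperbolic step $\tanh^{-1}x$ outward along the radius, the geometric-series bound $\int_{r_*}^{1}|f'(\sigma\zeta)|\,d\sigma\le C_1\phi(r_*\zeta)$, the deduction of boundedness, and the crosscut estimate ${\rm diam}\, f(\widetilde H_2)\le(1+8C_1)\,{\rm diam}\, C$ are all sound. One remark: since the crosscut characterization quoted in Section~\ref{sec3} is for \emph{bounded} domains, and the Koebe function shows the stated supremum condition fails for the unbounded John disk $\IC\setminus(-\infty,-1/4]$, the lemma must be read with ``John domain'' meaning bounded John domain, as in \cite{COP96}; your argument implicitly and correctly assumes this, but it is worth saying, given that Section~\ref{sec4} of the paper switches to the unbounded definition of \cite{NV91}.

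The ``only if'' direction contains a genuine gap, which you yourself flag: the displayed estimate $d(\rho\zeta)\le c\bigl((1-\rho)/(1-r)\bigr)^{\alpha}d(r\zeta)$ is not proved, and it is not a harmless technical input --- up to the trivial iteration and Koebe bookkeeping you describe, it is \emph{equivalent} to the direction being established (one-step decay with ratio $q<1$ iterates to power decay, and conversely a power decay with large enough step gives ratio $<1$). So the entire analytic content of this half of the lemma has been delegated to a citation. The parenthetical sketch you offer does not close it: the John condition supplies \emph{some} well-behaved curve from $f(\rho\zeta)$ to the John centre, and converting that into decay of $d$ along the specific radial segment $[r\zeta,\rho\zeta]$ requires knowing that the image of the radius (or of the hyperbolic geodesic) has length comparable to that John curve --- that is, the Gehring--Hayman theorem, or an iteration over Carleson boxes using the crosscut condition at every scale. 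If you are content to cite the power-decay characterization of conformal John maps from \cite{NV91} or Pommerenke's book, your proof stands; as a self-contained argument it does not, and the hard half of the lemma remains unproved.
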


\medskip
This characterization plays an important role to prove the following necessary condition
for bounded John domains $f(\mathbb{D})$, when $f\in \mathcal{N}_2(2)$ (see \cite[Theorem~4]{COP96}).
In fact, in its proof,
a relationship between the derivatives $S_f$ and $T_f$ as well as an upper bound for 
$|T_f|$ are used.

\begin{lemma}\cite[Theorem~4]{COP96}\label{l0}
Let $f\in \mathcal{N}_2(2)$ and $f(\D)$ be a bounded John domain. Then
$$\limsup_{|z|\to 1}(1-|z|^2)\real(zT_f(z))<2.
$$
\end{lemma}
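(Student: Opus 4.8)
The plan is to exploit Lemma~\ref{cop-l2} together with the identity $S_f=T_f'-\tfrac12 T_f^2$ and the sharp bound $(1-|z|^2)|T_f(z)|\le 2|z|$ from Corollary~\ref{c1} (valid since $f\in\mathcal{N}_2(2)$). Suppose, for contradiction, that $\limsup_{|z|\to1}(1-|z|^2)\real(zT_f(z))=2$; since the quantity is bounded above by $2$ (by Corollary~\ref{c1} and Cauchy--Schwarz) this means there is a sequence $z_n\to\partial\D$ along which $(1-|z_n|^2)\real(z_nT_f(z_n))\to2$. Writing $z_n=r_n\zeta_n$ with $|\zeta_n|=1$ and passing to a subsequence so that $\zeta_n\to\zeta_0$, we want to show that along the radius through $\zeta_0$ (or along $\zeta_n$) the supremum quotient in Lemma~\ref{cop-l2} tends to $1$, contradicting the fact that $f(\D)$ is a John domain.

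First I would record the elementary computation
$$\frac{d}{dr}\log\bigl((1-r^2)|f'(r\zeta)|\bigr)=\real\!\left(\zeta T_f(r\zeta)\right)-\frac{2r}{1-r^2},$$
so that
$$\log\frac{(1-\rho^2)|f'(\rho\zeta)|}{(1-r^2)|f'(r\zeta)|}=\int_r^\rho\left(\real(\zeta T_f(s\zeta))-\frac{2s}{1-s^2}\right)ds.$$
The integrand is $\le0$ by Corollary~\ref{c1}, which re-proves that the quotient is $<1$; the point is to show it is \emph{not} bounded away from $1$ when the $\limsup$ equals $2$. The key analytic input is that $S_f$ is controlled: from $(1-|z|^2)^2|S_f(z)|\le2$ and $T_f'=\tfrac12T_f^2+S_f$ one gets a differential inequality for $\psi(s)=\real(\zeta T_f(s\zeta))$ showing that once $(1-r^2)\real(\zeta T_f(r\zeta))$ is close to $2$ at some $r$ near $1$, it stays close to $2$ (after rescaling) on a whole interval $[r,\rho]$ with $\rho=(x+r)/(1+xr)$ of fixed hyperbolic length; indeed the extremal function $\tfrac12\log\frac{1+z}{1-z}$, for which $T_f(z)=2z/(1-z^2)$, shows the bound is saturated identically along a radius. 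Concretely, I would compare $\psi$ to the solution $2s/(1-s^2)$ of the Gr\"onwall-type problem (exactly as in Theorem~\ref{newlemma}) but now \emph{from above is automatic and from below} near a near-extremal point: the gap $2s/(1-s^2)-\psi(s)$ satisfies a linear differential inequality, so if it is small at $r_n$ it remains small on $[r_n,\rho_n]$.

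Plugging this into the displayed integral: on $[r_n,\rho_n]$ we have $\real(\zeta_n T_f(s\zeta_n))-\frac{2s}{1-s^2}=o(1)\cdot\frac{1}{1-s^2}$ in a suitable sense, so
$$\log\frac{(1-\rho_n^2)|f'(\rho_n\zeta_n)|}{(1-r_n^2)|f'(r_n\zeta_n)|}=\int_{r_n}^{\rho_n}o\!\left(\frac{1}{1-s^2}\right)ds\longrightarrow0,$$
because $\int_{r_n}^{\rho_n}\frac{ds}{1-s^2}$ is exactly the (bounded, fixed) hyperbolic distance $\tanh^{-1}x$ determined by $x$. Hence the quotient in Lemma~\ref{cop-l2} has supremum $1$ for \emph{every} $x\in(0,1)$, so $f(\D)$ cannot be a John domain, a contradiction. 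Therefore $\limsup_{|z|\to1}(1-|z|^2)\real(zT_f(z))<2$.

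\textbf{Main obstacle.} The delicate point is the quantitative "stability of the extremal" step: turning "$(1-r_n^2)\real(\zeta_n T_f(r_n\zeta_n))\to2$" into a \emph{uniform} smallness of $\frac{2s}{1-s^2}-\real(\zeta_n T_f(s\zeta_n))$ on the whole hyperbolic ball of radius $\tanh^{-1}x$ around $r_n\zeta_n$. One must be careful that $\real(\zeta_n T_f)$, unlike $|T_f|$, is not a priori monotone, and that the Gr\"onwall argument of Theorem~\ref{newlemma} only gives the one-sided bound $\psi\le w$; extracting a matching lower bound near a near-extremal point requires using the two-sided control $(1-|z|^2)^2|S_f(z)|\le2$ in the identity $T_f'=\tfrac12T_f^2+S_f$ and a compactness/normal-families argument (passing to a locally uniform limit of the rescaled maps $f_n(w)=$ suitable Möbius conjugate of $f$, whose Schwarzian limit still satisfies the Nehari bound, and identifying the limit as the extremal $\tfrac12\log\frac{1+z}{1-z}$ up to rotation). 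Making that limiting identification rigorous — in particular ruling out concentration or loss of the normalization — is where the real work lies; everything else is the routine integral estimate above.
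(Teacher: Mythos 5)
Your overall strategy is the right one, and it is exactly the method the paper itself uses when it proves the analogous statement for $\mathcal{M}_2(k)$ (Theorem~\ref{thm2.1}; Lemma~\ref{l0} itself is only cited from \cite{COP96}): argue by contradiction, write $\log\frac{(1-\rho^2)|f'(\rho\zeta)|}{(1-r^2)|f'(r\zeta)|}$ as the integral of $\real(\zeta T_f(s\zeta))-\frac{2s}{1-s^2}$, and show that near-extremality of $(1-|z_n|^2)\real(z_nT_f(z_n))$ forces this integral to tend to $0$ over an interval of fixed hyperbolic length, contradicting Lemma~\ref{cop-l2}. The problem is that the step you label the ``main obstacle'' --- propagating near-extremality from the single point $z_n$ to the whole interval --- is the entire content of the proof, you leave it unproven, and the mechanism you sketch for it points in the wrong direction.

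Here is the gap and the fix. Set $g(s)=\frac{2s}{1-s^2}-\real\bigl(\zeta T_f(s\zeta)\bigr)\ge 0$ (nonnegativity is Corollary~\ref{c1} together with $\real(\cdot)\le|\cdot|$). From $T_f'=\frac12T_f^2+S_f$, the Nehari bound and Corollary~\ref{c1} give $|T_f'(s\zeta)|\le\frac{2}{(1-s^2)^2}+\frac12\bigl(\frac{2s}{1-s^2}\bigr)^2=\frac{d}{ds}\frac{2s}{1-s^2}$, hence $g'(s)\ge \frac{d}{ds}\frac{2s}{1-s^2}-|T_f'(s\zeta)|\ge 0$: the gap $g$ is \emph{non-decreasing} in $s$. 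Therefore smallness of the normalized gap propagates from an outer point to all \emph{smaller} radii (up to the harmless factor $\frac{1-r^2}{1-\rho^2}\le\frac{1+x}{1-x}$), not to larger ones. You anchor the near-extremal point at the inner endpoint $r_n$ and assert the gap ``remains small on $[r_n,\rho_n]$'' with $\rho_n>r_n$; the differential inequality gives no control in that direction, so as written the key claim is unsupported. The remedy is to anchor at the outer endpoint, as in the proof of Theorem~\ref{thm2.1}: write $z_n=\rho_n\zeta_n$, put $r_n=(\rho_n-x)/(1-x\rho_n)$, note $(1-\rho_n^2)g(\rho_n)\to 0$ under the contradiction hypothesis, use monotonicity to get $g(s)\le g(\rho_n)$ on $[r_n,\rho_n]$, and conclude $\int_{r_n}^{\rho_n}g(s)\,ds\le g(\rho_n)(\rho_n-r_n)=g(\rho_n)\,x(1-\rho_n^2)/(1-x\rho_n)\to 0$, which is exactly what contradicts Lemma~\ref{cop-l2}. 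Once this monotonicity observation is in place, the normal-families/rescaling identification of the extremal that you invoke as a fallback is unnecessary (and would be considerably harder to make rigorous than the one-line differential inequality).
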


\medskip
Naturally, one can ask the analog of Lemma~\ref{l0} for the family 
$\mathcal{N}_2(k)$, $0\le k< 2$.
From \cite[Lemma~1]{CO94} it is clear that for all bounded mappings,
$$\limsup_{|z|\to 1}(1-|z|^2)^2|S_f(z)|\le k \implies \limsup_{|z|\to 1}(1-|z|^2)|T_f(z)|\le k.
$$
From this, we conclude that, for $f\in \mathcal{N}_2(k)$, $0\le k\le 2$,
$$\limsup_{|z|\to 1}(1-|z|^2)\real(zT_f(z))\le k.
$$ 
By the similar argument, from Theorem~$\ref{newlemma}$, we can say that
$$\limsup_{|z|\to 1}(1-|z|^2)\real(zT_f(z))\le 2-\sqrt{4-2k},
$$ 
for all bounded mappings in $\mathcal{N}_2(k)$, $0\le k\le 2$. 
Here the bound $2-\sqrt{4-2k}$ improves the bound $k$ but the same proof method given in \cite[Theorem~4]{COP96} is not working to get the exact analog of Lemma~\ref{l0} 
for the class $\mathcal{N}_2(k)$, $0\le k<2$.

\medskip
An analog of Lemma~$\ref{l0}$ for the class $\mathcal{M}_2(k)$ is now presented here. We use the similar technique as used to prove Lemma~$\ref{l0}$.  
\begin{theorem}\label{thm2.1}
Let $f\in \mathcal{M}_2(k)$, $0\le k< 2$. Then 
$$\limsup_{|z|\to 1}(1-|z|^2)\real(zT_f(z))<2+\sqrt{4-2k}.
$$
\end{theorem}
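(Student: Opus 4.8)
The plan is to imitate the proof of Lemma~\ref{l0} from \cite[Theorem~4]{COP96}, feeding in the sharp pre-Schwarzian bound for $\mathcal{M}_2(k)$ supplied by Lemma~\ref{l1} in place of Corollary~\ref{c1}. The starting point is the elementary identity $T_f'(z)=\tfrac12 T_f^2(z)+S_f(z)$ together with the hypothesis $(1-|z|^2)^2|S_f(z)|\le k$; these give a differential inequality controlling the growth of $(1-|z|^2)\real(zT_f(z))$ along radii. First I would set $g(r)=(1-r^2)\real(\zeta T_f(r\zeta))$ for a fixed $\zeta$ with $|\zeta|=1$, so that by Lemma~\ref{l1} one already knows $|g(r)|\le (1-r^2)|T_f(r\zeta)|\le 2r+\sqrt{4-2k}$, hence $\limsup_{r\to 1}g(r)\le 2+\sqrt{4-2k}$. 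The whole point is to upgrade this $\le$ to a strict $<$ using that $f(\D)$ is a John domain.

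The key step is to invoke Lemma~\ref{cop-l2}: since $f$ is univalent and $f(\D)$ is a John domain, there is $x\in(0,1)$ and $q<1$ with
$$\frac{(1-\rho^2)|f'(\rho\zeta)|}{(1-r^2)|f'(r\zeta)|}\le q<1,\qquad \rho=\frac{x+r}{1+xr},$$
uniformly in $\zeta$ and $r$. Taking logarithms and writing $\log|f'|$ as a line integral of $\real(\zeta T_f)$ along the radial segment from $r\zeta$ to $\rho\zeta$, this says
$$\int_r^{\rho}\real\big(\zeta T_f(s\zeta)\big)\,ds\le \log q<0.$$
Now I would substitute $\real(\zeta T_f(s\zeta))=g(s)/(1-s^2)$ and suppose, for contradiction, that $\limsup_{r\to1}g(r)=2+\sqrt{4-2k}$ (the most it can be). Choosing $r_n\to1$ along which $g(r_n)\to 2+\sqrt{4-2k}$, one estimates $g(s)$ from below on the interval $[r_n,\rho_n]$ using the differential inequality $g'(s)\ge \tfrac12 \cdot(\text{something})-\tfrac{2ks}{1-s^2}$ obtained from $T_f'=\tfrac12T_f^2+S_f$; the crucial feature, exactly as in \cite{COP96}, is that once $g$ is close to its extremal value the drift term $\tfrac12\real(\zeta^2 T_f^2)$ is large enough that $g$ cannot decrease fast, so $\int_{r_n}^{\rho_n}g(s)/(1-s^2)\,ds\to +\infty$ (because $\rho_n\to1$ and the integrand stays bounded below by a positive multiple of $1/(1-s^2)$). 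This contradicts the John condition $\int_{r_n}^{\rho_n}g(s)/(1-s^2)\,ds\le\log q<0$, and the contradiction forces $\limsup_{|z|\to1}(1-|z|^2)\real(zT_f(z))<2+\sqrt{4-2k}$.

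The main obstacle is the bookkeeping in the lower bound for $g$ on $[r_n,\rho_n]$: one must show that $g$ staying near $2+\sqrt{4-2k}$ at the left endpoint propagates across the whole interval. Here the role of the normalization $f''(0)=\sqrt{4-2k}$ (equivalently the precise form $w(x)=(2x+\sqrt{4-2k})/(1-x^2)$ of the extremal ODE solution) is essential: the extremal function $F_0$ of Lemma~\ref{l1} is exactly the one for which $g\equiv 2r+\sqrt{4-2k}$ identically, so the differential inequality is sharp and its linearization around the extremal profile has the right sign to prevent $g$ from dropping. Concretely, writing $h(s)=(2s+\sqrt{4-2k})/(1-s^2)-g(s)\ge0$, one shows $h$ satisfies a Gr\"onwall-type inequality $h'(s)\le h(s)\,\varphi(s)$ with $\varphi$ integrable near where needed, so $h$ cannot grow; combined with $h(r_n)\to0$ this keeps $g(s)$ close to $(2s+\sqrt{4-2k})/(1-s^2)\ge 2s+\sqrt{4-2k}$ on all of $[r_n,\rho_n]$, which is what makes the integral diverge. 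I expect this Gr\"onwall step, together with checking that the constants $x$ and $q$ from Lemma~\ref{cop-l2} can be used uniformly, to be the technical heart; the rest is a direct transcription of the $k=2$ argument of \cite[Theorem~4]{COP96} with $0$ replaced by $\sqrt{4-2k}$ throughout.
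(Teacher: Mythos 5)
Your overall strategy is the one the paper follows: argue by contradiction, use Lemma~\ref{l1} together with $T_f'=\tfrac12T_f^2+S_f$ to propagate near-extremality of $\real(\zeta T_f)$ across the interval $[r_n,\rho_n]$, and then contradict the John characterization of Lemma~\ref{cop-l2}. (One preliminary point you gloss over: the theorem does not assume $f(\D)$ is a John domain, so this must be supplied, as the paper does, from the Gehring--Pommerenke theorem \cite{GP84} using $k<2$.) However, the step where you actually derive the contradiction is wrong in two ways. First, the John condition is not $\int_r^\rho\real(\zeta T_f(s\zeta))\,ds\le\log q$: taking logarithms in Lemma~\ref{cop-l2} gives
$$\int_r^\rho\Bigl(\real\bigl(\zeta T_f(s\zeta)\bigr)-\frac{2s}{1-s^2}\Bigr)\,ds\le\log q<0,$$
since $\log\bigl((1-\rho^2)/(1-r^2)\bigr)=-\int_r^\rho \frac{2s}{1-s^2}\,ds$. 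Second, and more seriously, the claimed divergence $\int_{r_n}^{\rho_n}g(s)/(1-s^2)\,ds\to+\infty$ is false: because $\rho_n$ is the hyperbolic translate of $r_n$ by the fixed amount $x$, one has $\int_{r_n}^{\rho_n}\frac{ds}{1-s^2}=h_{\D}(r_n,\rho_n)=h_{\D}(0,x)$ for every $n$, and $g$ is bounded by $2+\sqrt{4-2k}$, so that integral is bounded by $(2+\sqrt{4-2k})\,h_{\D}(0,x)$. Nothing diverges, so as written your argument produces no contradiction.

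The correct punchline --- the one the paper extracts --- is quantitative rather than qualitative: once the propagation step yields $\real(\zeta_nT_f(s\zeta_n))\ge(2s+\sqrt{4-2k}-\epsilon)/(1-s^2)$ on $[r_n,\rho_n]$, the corrected integral above is at least $(\sqrt{4-2k}-\epsilon)\,h_{\D}(0,x)\ge-\epsilon\, h_{\D}(0,x)$, i.e.\ the ratio in Lemma~\ref{cop-l2} is at least $e^{-\epsilon h_{\D}(0,x)}$, which for $\epsilon$ small exceeds any fixed $q<1$. So your Gr\"onwall-type propagation is the right idea (the paper implements it more directly by integrating $|T_f'(t\zeta_n)|\le\frac{d}{dt}\bigl(\frac{2t+\sqrt{4-2k}}{1-t^2}\bigr)$ along the radius, with the near-extremal point at the outer endpoint $\rho_n=|z_n|$), but the final step must be replaced: you contradict the strict bound $q<1$ in Lemma~\ref{cop-l2}, not a divergence.
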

\begin{proof}
From \cite[Theorem~6]{GP84}, it is clear that $f(\D)$ is a John domain.
By Lemma \ref{l1} we get
$$|T_f(z)|\le \frac{2|z|+\sqrt{4-2k}}{1-|z|^2},
$$
and, with $|z|=r$,
\begin{eqnarray*}
|T_f^{\prime}(z)|&=&\left|S_f(z)+\frac{1}{2}T_f^2(z)\right|\\
&\le & \frac{k}{(1-r^2)^2}+\frac{1}{2}\left(\frac{2r+\sqrt{4-2k}}{1-r^2}\right)^2\\
&=&\frac{d}{dr}\left(\frac{2r+\sqrt{4-2k}}{1-r^2}\right).
\end{eqnarray*}
We prove the theorem by contradiction method.
Suppose that the required inequality does not hold. That is, $\exists$ a sequence $z_m\in \D$ with $|z_m|\to 1$ such that
$$\limsup_{|z_m|\to 1}(1-|z_m|^2)\real(z_mT_f(z_m))\ge 2+\sqrt{4-2k}.
$$ 
Now choose a subsequence $z_{m_l}(=z_n)$ of $z_m$ with $|z_n|\to 1$ such that
\begin{equation}\label{e1}
(1-|z_n|^2)\real\{z_nT_f(z_n)\}\to 2+\sqrt{4-2k}
\end{equation}
holds.
Let $x\in (0,1)$ be fixed. Set $z_n=\rho_n\zeta_n$, $|\zeta_n|=1$, and $r_n=(\rho_n-x)/(1-x\rho_n)$. 
The above upper bound for $T_f'$ leads to
$$ |\real\{\zeta_nT_f(z_n)\}-\real\{\zeta_nT_f(r\zeta_n)\}|\le \int_r^{\rho_n}|T_f^\prime(t\zeta_n)|dt
\le \frac{2\rho_n+\sqrt{4-2k}}{1-\rho_n^2}-\frac{2r+\sqrt{4-2k}}{1-r^2}
$$
or,
$$-\real\{\zeta_nT_f(r\zeta_n)\}\le \frac{2\rho_n+\sqrt{4-2k}}{1-\rho_n^2}
-\frac{2r+\sqrt{4-2k}}{1-r^2}-\real\{\zeta_nT_f(z_n)\}
$$
or,
$$\hspace*{-7cm}-\frac{1-r^2}{r}\real\{\zeta_nT_f(r\zeta_n)\}\le -\frac{\sqrt{4-2k}+2r}{r}
$$
$$\hspace*{5cm}+\frac{1-r^2}{1-\rho_n^2}\frac{1}{r\rho_n}\left[(\sqrt{4-2k}+2\rho_n)
\rho_n-(1-\rho_n^2)\real \{z_nT_f(z_n)\}\right].
$$
If $r_n\le r \le \rho_n$ then
$$ \frac{1-r^2}{1-\rho_n^2}\le \frac{1+x}{1-x}
$$
and
$$ -\frac{\sqrt{4-2k}+2r}{r} \le -\frac{\sqrt{4-2k}+2\rho_n}{\rho_n}.
$$
Hence,  
\begin{align*}
& (2+\sqrt{4-2k})-\frac{1-r^2}{r}\real\{\zeta_nT_f(r\zeta_n)\}\\
& \hspace*{2cm} \le \left( \frac{1+x}{1-x}\right) 
\left(2+\sqrt{4-2k}-(1-|z_n|^2)\real\{z_nT_f(z_n)\}\right).
\end{align*}
Therefore, by the assumption (\ref{e1}) we get
$$\left|(2+\sqrt{4-2k})-\frac{1-r^2}{r}\real\{\zeta_nT_f(r\zeta_n)\} \right|< \epsilon
$$
for all $n\ge n_0(\epsilon,x)$.

From the above estimations we get
\begin{eqnarray*}
\log \frac{(1-r_n^2)|f^\prime(r_n\zeta_n)|}{(1-\rho_n^2)|f^\prime(\rho_n\zeta_n)|} &=& \int_{r_n}^{\rho_n} 
\left(\frac{2r}{1-r^2}-\real\{\zeta_nT_f(r\zeta_n)\right)dr\\
&<& \int_{r_n}^{\rho_n} \frac{\epsilon}{1-r^2}dr-\sqrt{4-2k} \int_{r_n}^{\rho_n} \frac{r}{1-r^2} dr\\
&<& \int_{r_n}^{\rho_n} \frac{\epsilon}{1-r^2}dr = \epsilon h_{\D}(r_n\zeta_n,\rho_n\zeta_n)=\epsilon h_{\D}(0,x),
\end{eqnarray*}
for $n\ge n_0$. Here, $h_{\D}(\cdot,\cdot)$ denotes the usual hyperbolic distance of the unit disk $\D$. 
Thus,
$$\frac{(1-\rho_n^2)|f^\prime(\rho_n\zeta_n)|}{(1-r_n^2)|f^\prime(r_n\zeta_n)|}> e^{-\epsilon h_{\D}(0,x)}.
$$
But since $\rho_n=(r_n+x)/(1+xr_n)$, the last inequality contradicts to Lemma \ref{cop-l2}.
\end{proof}
\begin{remark}
One can ask similar questions when $f\in \mathcal{N}_0(k)$, $0\le k\le \pi^2/2$ and 
$f\in  \mathcal{N}_1(k)$, $0\le k\le 4$. Indeed, we notice that in these cases the quantity
$$\limsup_{|z|\to 1}(1-|z|^2)\real\left\{zT_f(z)\right\}
$$
vanishes due to \cite[Lemma~1]{CO94}. 
\end{remark}

\section{Concluding Remarks}\label{sec4}
Recall that N\"akki and V\"ais\"al\"a in \cite{NV91} 
introduced the notion of John domains when they are unbounded
and also studied several characterizations of such domains.
According to them, John domains are defined as follows:

\begin{definition}\label{JohnDomain}
A domain $D\subset \mathbb{C}$ is said to be a {\em John domain} if any pair of points
$z_1,z_2\in D$ can be joined by a rectifiable path $\gamma\subset D$ such that
$$\min\{\ell(\gamma[z_1,z]),\ell(\gamma[z,z_2])\}\le c\,{\rm dist}\,(z,\partial D), 
\quad \mbox{ for all $z\in \gamma$},
$$
and for some constant $c>0$, where $\ell(\gamma[z,z_i])$ denote the Euclidean length of 
$\gamma$ joining $z$ to $z_i$, $i=1,2$. 
\end{definition}

Note that a simply connected John domain is called a {\em John disk} and when 
John domains are bounded, then Definition~\ref{JohnDomain} is equivalent to 
the definition of John domains discussed in Section~\ref{sec3} (see \cite{NV91}).
One can check that the parallel strip $D_1:=\{z\in\mathbb{C}:\,|{\rm Im}\,z|<\pi/4\}$ and 
the two-sided slit domain $D_2$, the entire plane minus the two half-lines $-\infty<y\le -1/2$ and 
$1/2\le y<\infty$, $y={\rm Im}\,z$, are not John domains. 
But the half-planes and the Koebe domain 
are John domains.
 
In this context we are interested to introduce the notion of {\em John functions}.
Motivation behind this comes from the definition of starlike and 
convex functions in $\D$. A starlike function is a conformal mapping of the unit disk
onto a domain starlike with respect to the origin and a convex function is 
one which maps the unit disk conformally onto a
convex domain.
For the theory of starlike and convex functions, we refer to the standard books \cite{Dur83,Goodman}. 
For analytic functions $f$ in $\D$, certain characterizations of John domains $f(\D)$
have been studied in \cite{COP96,Hag01}, where functions were not necessarily assumed to be normalized
and univalent (see for instance Lemma~\ref{cop-l2}).  
It is also interesting to see what changes would come in the situation when analytic functions are
normalized and univalent. This naturally leads to the concept of introducing John functions in $\D$.

\begin{definition}
A function $f\in \mathcal{S}$ is said to be a {\em John function}
\footnote{The authors wish to call these functions ``John functions''
in honor of Professor Fritz John.}
if $f(\D)$ is a John disk.
\end{definition}

Clearly, $f$ is bounded if and only if $f(\D)$ is a bounded John disk. We also call such functions 
the bounded John functions.
The functions $f_1(z)=(1/2){\rm Log}\,[(1+z)/(1-z)]$
and $f_2(z)=z/(1-z^2)$ respectively map the unit disk onto the parallel strip $D_1$
and the two-sided slit domain $D_2$. Since $D_1$ and $D_2$ are not John domains,
the functions $f_1$ and $f_2$ are not John functions. 
On the other hand, the functions $g_1(z)=z/(1-z)$ and $g_2(z)=z/(1-z)^2$ are John. 

We conclude this section with the following future directional work.

The famous analytical characterization of the starlike and convex functions are respectively
\begin{align}\label{StarCov}
{\rm Re}\,\left(\frac{zf'(z)}{f(z)}\right)>0 ~\mbox{ and }~
{\rm Re}\,\left(1+\frac{zf''(z)}{f'(z)}\right)>0, \quad z\in\D.
\end{align}
It is discussed above that neither convex nor starlike functions are 
necessarily John functions and also the other way around implication fails.
Therefore, although certain characterizations of John functions in different situations 
are studied 
in \cite{COP96,Hag01} (see also Lemma~\ref{cop-l2}), 
it would be interesting to find analytical characterizations of John functions 
similar to that of convex and starlike functions stated in (\ref{StarCov}). 

\vskip 1cm
\noindent
{\bf Acknowledgement.} The work of the first author was supported by University
Grants Commission, New Delhi (grant no. F.2-39/2011 (SA-I)). 
This research has been carried out from our earlier work when the 
first author was a PhD student at the Discipline of Mathematics, 
Indian Institute of Technology Indore. The authors would like to 
thank the referees for their careful reading of the previous versions of the paper and valuable remarks.

\end{document}